\newtheorem{thm}{Theorem}[section]
\newtheorem{prop}{Proposition}[section]
\newtheorem{lem}{Lemma}[section]
\def\sds{\strut \displaystyle}
\def\Z{{\mathbb Z}}
\def\R{{\mathbb R}}
\definecolor{MyDarkBlue}{rgb}{0,0.08,0.45}
\definecolor{ColorName}{rgb}{0,0.6,0}
\def\sds{\strut \displaystyle}
\begin{document}
\pagestyle{plain} 
\pagenumbering{arabic}

\title{Qin's Algorithm, Continued Fractions and 2-dimensional Lattices}

\author{Han Wu\thanks{School of Cyber Science and Technology, Shandong University, Qingdao {\rm  266237}, China;  e-mail: {\tt hanwu97@mail.sdu.edu.cn}.}
~and Guangwu Xu\thanks{School of Cyber Science and Technology, Shandong University, Qingdao {\rm  266237}, China; e-mail: {\tt gxu4sdq@sdu.edu.cn}. (Corresponding author)  }}

\maketitle

\begin{abstract}
In  his celebrated book ``Mathematical Treatise in Nine Sections'' of 1247,  Qin, Jiushao
described the Chinese remainder theorem with great detail and generality. He also gave a
method for computing  modular  inverse under the name of
``DaYan deriving one''.
 Historical significance of  DaYan deriving one method has been well studied.
In this paper, we investigate its modern mathematical nature
from the perspectives of  number theory and algorithm.
One of the remarkable features of Qin's algorithm is that it
keeps a state of four variables in a matrix form. Its choice of variables and layout
provide natural ways of connecting several important mathematical concepts. An invariant about
the state is also observed which provides a convenient yet powerful tool in proving several important mathematical
results. The paper first explains Qin's algorithm and proves some of its properties. Then the connection with
continued fractions is examined, the results show that the states of  Qin's algorithm
contain rich information about continued fractions and some classical arguments can be derived easily.
The last part of the paper discusses a family of 2-dimensional lattices of number theoretic significance
by proving that the shortest vectors of these lattices can be obtained from the states of  Qin's algorithm. This result is
surprising in that a shortest lattice vector is found in a well-regulated set. 
A method of computing such shortest vectors is proposed.

{\bf Key words: Qin's algorithm, continued fractions, shortest lattice vectors.}

 MSC(2020)11Y16, 11T71, 68R01, 68Q25
\end{abstract}

\setcounter{footnote}{0}

\section{Introduction}
In his 1247 book ``The Mathematical Treatise in Nine Sections'' \cite{Qin},  Jiushao Qin introduced the method of ``DaYan aggregation''
which contains a detailed version of
the Chinese Remainder Theorem (CRT). One of the key technical components for solving the CRT is to compute a
 modular inverse.
Jiushao Qin described an algorithm for such a calculation which he named ``DaYan deriving one''. A  faithful modern interpretation of
Qin's algorithm has been discussed in \cite{Xu,XuLi} where some useful properties are analyzed, some unique features that are different
from the extended Euclidean algorithm are also revealed.

Given coprime positive integers $m> a>1$, Qin's algorithm ``DaYan deriving one'' computes $a^{-1} \pmod m$.
The following is an English translation of Qin's algorithm taken from \cite{Libbrect}: 

\begin{center}
\begin{tabular}{|p{12cm}|}
\hline
${}\quad $ {\bf Qin's Algorithm: DaYan Deriving One} \\
 \hline
$\bullet$ {\tt Set up the number $a$ at the right hand above, the number $m$ at the right hand below.
Set  $1$ at the left hand above.}\\
$\bullet$ {\tt First divide the `right below' by the `right above', and the quotient
obtained, multiply it by the $1$ of  `left above' and add it to  `left below'}.\\
$\bullet$ {\tt  After this, in the `upper' and `lower' of the right column,
divide  the larger number by the smaller one. Transmit  and divide them by each other.
Next bring over the quotient obtained and [cross-] multiply with each other. Add the `upper' and the `lower' of the
left column.}\\
$\bullet$ {\tt One has to go until the last remainder of the `right above' is $1$
and then one can stop. Then  you examine the result of `left above'; take it as the modular inverse.}
\\
 \hline
 \end{tabular}
\end{center}
This ancient procedure is very close to a modern pseudo-code. It keeps a {\sl state} of four variables in a form
of $ \begin{matrix}\mbox{\tt left-above} & \mbox{\tt right-above} \\\mbox{\tt left-below} & \mbox{\tt right-below} \end{matrix} $.
We shall denote such a state as a $2\times 2$ matrix.
The matrix representation is  mathematically natural
since Qin's procedure has a  matrix multiplication interpretation.

We note that in Qin's algorithm, given the initial state $\begin{pmatrix}1 & a \\0 &  m \end{pmatrix}$,
the procedure executes steps which are exactly a while-loop. The termination condition of the while-loop is ``{\tt until the last remainder of the `right above' is $1$}''.
As discussed later, the values stored in
entries {\tt right-above} and {\tt right-below} are remainders of the divisions, this is among the several differences with the extended Euclidean algorithm presented in \cite{Bach}. The final state of  Qin's algorithm
is like $\begin{pmatrix}a^{-1}\pmod m &  \quad 1 \\ * &  \quad * \end{pmatrix}$. As it can be seen later, if we go one step further following
the instruction in  Qin's algorithm, a state of the form $\begin{pmatrix}a^{-1}&  \quad 1 \\ m &  \quad 0\end{pmatrix}$ is obtained.
So behind the algorithm, there is a beautiful mathematical duality.

It is observed that there exists an {\sl invariant} for the states in Qin's algorithm. It turns out that this simple invariant
is convenient yet powerful in proving several critical steps of  our results.

We find that the data structure designed in Qin's algorithm gives additional insights into the connection with other number theory concepts.
The main purpose of this paper is to
discuss how Qin's algorithm is connected with continued fractions and a class of important 2-dimensional lattices.
More precisely, we study continued fractions in terms of the states of Qin's algorithm and find that
rich information can be revealed in this manner, including simple derivations of several classical results.
We also study a family of 2-dimensional lattices of number theoretic significance.
It is proved that the state matrices of Qin's algorithm
with respect to the lattice parameters form a set of bases of the lattice. Furthermore, 
we prove that a shortest vector
of the lattice can be derived from one of the states of Qin's algorithm. This is
quite surprising because we are able to get a shortest lattice vector  in a well-regulated set. 
We also propose a method of computing such shortest vectors after proving the {\sl monotone property for
inner product} with respect to the states.

The organization of the rest of the paper is as follows.
In Section \ref{sec:QinAlg}, we describe the modern form of Qin's algorithm with
some explanations and properties. We discuss continued fractions in terms of
states of Qin's algorithm in  Section
\ref{sec:CF}. Section \ref{sec:Latt}
considers a class of 2-dimensional lattices, theoretical  results
and a practical  method for shortest vectors in such lattices are given.

\section{The Method of DaYan
Driving One and Its Properties}\label{sec:QinAlg}
\subsection{The Formulation of Qin's Method of  DaYan Driving One}
We use  $ \begin{pmatrix}x_{11}&x_{12}\\x_{21}&x_{22}\end{pmatrix}  \triangleq
\begin{pmatrix}\mbox{\tt left-above} & \mbox{\tt righ-above} \\\mbox{\tt left-below} & \mbox{\tt right-below} \end{pmatrix} $ to denote
the state in Qin's method of DaYan
Driving One in order to write a modern pseudo-code. So initially
$\begin{pmatrix}x_{11}&x_{12}\\x_{21}&x_{22}\end{pmatrix} =\begin{pmatrix}1 & a \\0 & m \end{pmatrix}$. The final state
is of the form $\begin{pmatrix}x_{11}&x_{12}\\x_{21}&x_{22}\end{pmatrix} =\begin{pmatrix}a^{-1} & 1 \\ * & * \end{pmatrix}$.

First, we need to remark that the  termination condition of
 ``{\tt until the last remainder of the `right above' is $1$}'' (or $x_{12}=1$ in the final state)
 has been questioned by several papers appeared in \cite{Wu_ed} (also in  \cite{Libbrect}).
This is indeed the case if  the usual (positive) integer division ($d$ divides $c$ )
\[
  c = \bigg\lfloor \frac{c}d\bigg\rfloor d + r
\]
is used and the remainder $r$ is the least nonnegative residue modulo $d$, i.e., $0\le r <d$.
However, we believe that Qin made no mistake in his termination condition, namely, after an even number of steps (this is another interesting
fact of Qin's design), $x_{12}=1$ can always be achieved. The key observation is that one should use the division such that
the remainder $r$ is the least positive residue modulo  $d$. This sort of division is also mentioned  in \cite{Wang}.
In \cite{XuLi}, a detailed explanation about this has been given.
We shall make a brief account here: in ancient China,
this form of division that requires the remainder to be
the least positive residue modulo the divisor might be used.
As an example, a divination method using ``I Ching'' (Book of Change, 1000-400 BC)
is to generate a hexagram by the manipulation of $50$ yarrow stalks. In this process, division by $4$ is used and the remainder
must belong to $\{1,2,3,4\}$. It should be noted that Qin also described this divination method in his book \cite{Qin}. This division can be expressed as:
for positive integers $c$ and $d$, there is a unique $r$ with $1\le r\le d$, such that
\[
  c = \bigg\lfloor \frac{c-1}d\bigg\rfloor d + r.
\]
This remainder $r$ is the least positive residue modulo  $d$.

By using this type of division, we are able
to formulate Qin's algorithm in modern language which is faithful to his original idea; in particular,
$x_{12}=1$ can always be achieved \cite{XuLi}.
\begin{center}
\begin{tabular}{|l|}
\hline
${}\quad $ {\bf Qin's Algorithm: DaYan Deriving One} \\
 \hline
{\bf Input:} $\quad a, m$ with $1<a<m, \gcd(a, m)=1$,\\
{\bf Output:} positive integer $u$ such  that $ua \equiv 1 \pmod m$.\\
 \hline
 \hskip 0.2cm $\begin{pmatrix}x_{11}&x_{12}\\x_{21}&x_{22}\end{pmatrix} \gets \begin{pmatrix}1 & a \\0 & m \end{pmatrix}$;\\
 \hskip 0.2cm {\tt while } ($x_{12} \neq 1 $) {\tt do }\\
 \hskip 0.5cm {\tt if ( $x_{22} > x_{12}$ )}\\
 \hskip 0.5cm${}\quad\quad  q \gets \lfloor \frac{x_{22}-1}{x_{12}} \rfloor$ ; \\
 \hskip 0.5cm${}\quad\quad  x_{21} \gets x_{21}+ q x_{11}$;\\
 \hskip 0.5cm${}\quad\quad  x_{22} \gets x_{22} - q x_{12};$  (*This is just the remainder*)\\
 \hskip 0.5cm {\tt else if ( $x_{12} > x_{22}$ )}\\
\hskip 0.5cm ${}\quad\quad  q \gets \lfloor \frac{x_{12}-1}{x_{22}} \rfloor$ ; \\
\hskip 0.5cm${}\quad\quad  x_{11} \gets x_{11}+q x_{21};$\\
\hskip 0.5cm ${}\quad\quad  x_{12} \gets x_{12} - q x_{22};$  (*This is just the remainder*)\\
 \hskip 0.2cm $u\gets x_{11}$;\\
 \hline
 \end{tabular}
\end{center}

We now give a more detailed  explanation about why  the least positive residue  modulo the divisor
should be used in Qin's algorithm.

We note that the first step updates the second row of the state, the second step
updates the first row of the state. Keeping this manner, we see that the algorithm terminates
only when the first  row of the state gets updated to make $x_{12} = 1 $, this must be in
the even numbered step. This has been pointed out in \cite{XuLi,Xu}.

With respect to  $1<a<m$ with $\gcd(a,m)=1$, for the state  $\begin{pmatrix}x_{11}&x_{12}\\x_{21}&x_{22}\end{pmatrix}$ in step $k$ of Qin's algorithm, we denote it
as
\[
{\cal X}_k=\begin{pmatrix}x_{11}^{(k)}&x_{12}^{(k)}\\x_{21}^{(k)}&x_{22}^{(k)}\end{pmatrix}.
\]
We also write the quotient $q$ in step $k$ of Qin's algorithm as $q_k$.

Using the least non-negative residue, the Euclidean division gives
\[\begin{array}{lcll}
  m&=& \bar{q}_1a+r_1,  \\
  a&=&\bar{q}_2r_1 + r_2,  \\
 r_1 &= &\bar{q}_3 r_2 + r_3,  \\
 &\cdots& \quad  \cdots\quad  \cdots
\\
 r_{n-3}&=& \bar{q}_{n-1}r_{n-2} + r_{n-1},
\\
 r_{n-2}&=&\bar{q}_{n}r_{n-1} + r_{n}.
 \end{array}\]
with $1=r_n<r_{n-1}<\cdots< r_1<a<m$.

For $k<n$, since $1<r_k<r_{k-1}$,  $\lfloor \frac{r_{k-1}-1}{r_{k}} \rfloor=\lfloor \frac{r_{k-1}}{r_{k}} \rfloor$, so
$\bar{q}_k=q_k$.

If $n$ is even, then in the last step of Qin's algorithm, $x_{12}^{(n)} = r_{n}=1 $. In this case, we also have $\bar{q}_n=q_n$.

If $n$ is odd, then at step $n$, we have $x_{22}^{(n)} = r_{n}=1 $, but  $x_{12} = 1 $ has not reached yet. According to Qin's procedure,
the next step  performs
\[
q_{n+1} = \lfloor \frac{r_{n-1}-1}{r_{n}} \rfloor = r_{n-1}-1,
\]
so $x_{12}^{(n+1)} = x_{12}^{(n)}- q_{n+1} x_{22}^{(n)}=r_{n-1}- (r_{n-1}-1)\cdot 1 =1$. In this case, we also have $\bar{q}_n=q_n,
q_{n+1}=r_{n-1}-1 =x_{12}^{(n)}-1$.
This is the situation that the least positive residue is really needed in Qin's procedure,
since in other situations, the effect of taking the least positive residue
is the same as taking non-negative residue.

\subsection{Properties of Qin's Method}

To make the discussion more precise, we
shall list the state matrices of Qin's algorithm in a sequence form.

In order to perform matrix operations, we work on
a variation of the state called {\sl s-state}:
\[
\widehat{{\cal X}_k}=\begin{pmatrix}x_{11}^{(k)}& -x_{12}^{(k)}\\x_{21}^{(k)}&x_{22}^{(k)}\end{pmatrix}.
\]
The two row vectors of $\widehat{{\cal X}_k}$ are denoted by
$\widehat{v_1}^{(k)}$ and $\widehat{v_2}^{(k)}$  respectively, namely
\[
\widehat{v_1}^{(k)}=(x_{11}^{(k)}, -x_{12}^{(k)}), \ \widehat{v_2}^{(k)}=(x_{21}^{(k)}, x_{22}^{(k)}).
\]

Now we collect some useful properties of the states (s-states) as well as their row vectors.
\begin{enumerate}
\item Each entry $x_{ij}^{(k)}$ of the state ${\cal X}_k$ is non-negative.
In particular, different from the extended Euclidean algorithm,
the modular inverse  returned by Qin's algorithm is always positive.
\item Given the initial state
${\cal X}_0=\begin{pmatrix}1&a\\0&m\end{pmatrix}$,
Qin's algorithm implies the recursive relation for the sequence $\{\widehat{{\cal X}_k}\}$:
\begin{equation}\label{eq:2.1}
\widehat{{\cal X}_k}=\left\{\begin{array}{ll} \begin{pmatrix}1& 0\\q_k&1\end{pmatrix}\widehat{{\cal X}_{k-1}} & \mbox{ if $k$ is odd},\\
              \begin{pmatrix}1& 0\\ q_k&1\end{pmatrix}^{\top}\widehat{{\cal X}_{k-1}} & \mbox{ if $k$ is even}. \end{array}\right.
\end{equation}
In fact,
\[
\widehat{{\cal X}_1}= \begin{pmatrix}1& -a\\q_1&m-q_1a\end{pmatrix}=\begin{pmatrix}1& 0\\q_1& 1\end{pmatrix}\begin{pmatrix}1& -a\\0 &m\end{pmatrix}=\begin{pmatrix}1& 0\\q_1& 1\end{pmatrix}\widehat{{\cal X}_0},
\]
\[
\widehat{{\cal X}_2}= \begin{pmatrix}x_{11}^{(1)}+q_2x_{21}^{(1)}& -x_{12}^{(1)}+q_2x_{22}^{(1)}\\ x_{21}^{(1)}&x_{22}^{(1)}\end{pmatrix}=\begin{pmatrix}1& q_2\\0& 1\end{pmatrix}\widehat{{\cal X}_1},
\]
and the rest is easily checked in the same manner.
\item In any step $k$ of the Qin's method,
we always have $\det(\widehat{{\cal X}_k})=m$, i.e.
\begin{equation}\label{eq:2.2}
x_{11}^{(k)}x_{22}^{(k)}+x_{12}^{(k)}x_{21}^{(k)} = m.
\end{equation}
This fact has been proven in \cite{XuLi,Xu}.
We shall call this {\sl Qin's invariant}.
\item Let ${\cal X}_N=\begin{pmatrix}x_{11}^{(N)}&x_{12}^{(N)}\\x_{21}^{(N)}&x_{22}^{(N)}\end{pmatrix}$ be the final state, then $N$ is
an even number, as mentioned earlier. So $x_{11}^{(N)}=a^{-1} \pmod m$ and $x_{12}^{(N)}=1$.

If we perform an elementary row  transformation to the final s-state
by  multiplying $x_{22}^{(N)}$ to the first row and then adding it to the second row,
with Qin's invariant and the fact that $x_{12}^{(N)}=1$, we see that
\[
\begin{pmatrix}x_{11}^{(N)}&-x_{12}^{(N)}\\x_{21}^{(N)}&x_{22}^{(N)}\end{pmatrix}\Rightarrow
\begin{pmatrix}x_{11}^{(N)}&-x_{12}^{(N)}\\x_{21}^{(N)}+x_{11}^{(N)}x_{22}^{(N)}& 0\end{pmatrix}=\begin{pmatrix} a^{-1}\pmod m & -1\\ m& 0 \end{pmatrix}.
\]
Actually, this transform is consistent with the action in Qin's algorithm.
Therefore, in essence, starting from the initial s-state $\begin{pmatrix} 1 & -a\\ 0& m \end{pmatrix}$, the final s-state in Qin's algorithm leads to $\begin{pmatrix} a^{-1} & -1\\ m& 0 \end{pmatrix}$.
This shows that the selection of state variables in Qin's algorithm is mathematically natural
and the algorithm reflects a beautiful duality.
\item
We have
\begin{eqnarray}\label{eq:2.3}
&&x_{11}^{(0)}=x_{11}^{(1)}<x_{11}^{(2)}=x_{11}^{(3)}<x_{11}^{(4)}=\cdots \notag\\
&&x_{21}^{(0)}<x_{21}^{(1)}=x_{21}^{(2)}<x_{21}^{(3)}=x_{21}^{(4)}<\cdots\\
&&x_{12}^{(0)}=x_{12}^{(1)}>x_{12}^{(2)}=x_{12}^{(3)}>x_{12}^{(4)}=\cdots  \notag\\
&&x_{22}^{(0)}>x_{22}^{(1)}=x_{22}^{(2)}> x_{22}^{(3)}=x_{22}^{(4)}>\cdots \notag
\end{eqnarray}
This means that the left column of ${\cal X}_k$ is increasing (in $k$) and the right column of ${\cal X}_k$ is decreasing (in $k$).
\end{enumerate}

\section{Continued Fractions}\label{sec:CF}
In this section, we discuss continued fractions in terms of the states of Qin's algorithm.
We just deal with the case for rational numbers (or the finite
 approximations of real numbers).
Given a rational number $0<\lambda <1$\footnote{We just omit the leading integer.},
there are coprime integers $a, m$ such that $\sds \lambda = \frac{a}m$.
Some connections of the continued fraction of $\sds \frac{a}m$
with the state matrices in Qin's algorithm  have been revealed in \cite{XuLi}.
Here we present more interesting facts about the continued fraction
from the s-state matrices $\{ \widehat{{\cal X}_k}\}$.
In our setting, the  inputs of Qin's algorithm
are the numerator and denominator of the number $\lambda$,
namely $a$ and $m$ with $1<a<m$, and $\gcd(a,m) =1$.

From the assumption that the number $\lambda<1$, we see that its continued fraction
is of the form
\[
[0,q_1, q_2, \cdots, q_N, q_{N+1} ],
\]
 where $N$ is the number of steps
of performing Qin's algorithm, and $q_{N+1}= x_{22}^{(N)} $. This is because the last step of Qin's algorithm produces $x_{12}^{(N)}=1$,
 in the process of
forming continued fraction of $\lambda$, the final division is $q_{N+1}=\frac{x_{22}^{(N)}}{x_{12}^{(N)}}=x_{22}^{(N)}$.

Let $\sds \frac{\alpha_k}{\beta_k}$ be the $k$-th order convergent of the continued fraction of the rational number $\lambda$,
we can prove the following theorem.
\begin{thm}\label{thm:3.1} For $k\ge 1$,
\begin{equation}\label{eq:3.1}
\widehat{{\cal X}_k}=\left\{\begin{array}{ll} \begin{pmatrix}\beta_{k-1}& \alpha_{k-1}\\ \beta_k&\alpha_k \end{pmatrix}\widehat{{\cal X}_{0}} & \mbox{ if $k$ is odd},\\
              \begin{pmatrix} \beta_{k}& \alpha_{k}\\ \beta_{k-1}&\alpha_{k-1} \end{pmatrix}\widehat{{\cal X}_{0}} & \mbox{ if $k$ is even}. \end{array}\right.
\end{equation}
In other words,
\begin{equation}\label{eq:3.2}
\widehat{{\cal X}_k}=\left\{\begin{array}{ll} \begin{pmatrix}\beta_{k-1}& m\beta_{k-1}\big(\frac{\alpha_{k-1}}{\beta_{k-1}}-\lambda\big)\\ \beta_k& m\beta_{k}\big(\frac{\alpha_{k}}{\beta_{k}}-\lambda\big) \end{pmatrix} & \mbox{ if $k$ is odd},\\
              \begin{pmatrix} \beta_k& m\beta_{k}\big(\frac{\alpha_{k}}{\beta_{k}}-\lambda\big)\\ \beta_{k-1}& m\beta_{k-1}\big(\frac{\alpha_{k-1}}{\beta_{k-1}}-\lambda\big) \end{pmatrix} & \mbox{ if $k$ is even}. \end{array}\right.
\end{equation}
\end{thm}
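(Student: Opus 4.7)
My plan is to prove the matrix identity (3.1) by induction on $k$, using the recursion (2.1) for the s-states together with the standard convergent recurrences
\[
\alpha_k = q_k \alpha_{k-1} + \alpha_{k-2}, \qquad \beta_k = q_k \beta_{k-1} + \beta_{k-2},
\]
initialized by $\alpha_{-1} = 1,\ \beta_{-1} = 0,\ \alpha_0 = 0,\ \beta_0 = 1$, which is the right convention for the continued fraction $[0,q_1,q_2,\ldots,q_{N+1}]$. Equation (3.2) will then follow from (3.1) by a single explicit right-multiplication against $\widehat{{\cal X}_0} = \begin{pmatrix}1 & -a \\ 0 & m\end{pmatrix}$.

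For the base case $k = 1$, the claimed identity reads
\[
\widehat{{\cal X}_1} = \begin{pmatrix}\beta_0 & \alpha_0 \\ \beta_1 & \alpha_1\end{pmatrix}\widehat{{\cal X}_0} = \begin{pmatrix}1 & 0 \\ q_1 & 1\end{pmatrix}\widehat{{\cal X}_0},
\]
which is exactly the first instance of (2.1). For the inductive step I would split on parity. If $k$ is odd, the inductive hypothesis in its even form gives $\widehat{{\cal X}_{k-1}} = \begin{pmatrix}\beta_{k-1} & \alpha_{k-1} \\ \beta_{k-2} & \alpha_{k-2}\end{pmatrix}\widehat{{\cal X}_0}$; premultiplying by $\begin{pmatrix}1 & 0 \\ q_k & 1\end{pmatrix}$ replaces the bottom row by $(q_k\beta_{k-1} + \beta_{k-2},\ q_k\alpha_{k-1} + \alpha_{k-2}) = (\beta_k, \alpha_k)$, exactly matching the odd form in (3.1). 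The even case is completely symmetric: the inductive hypothesis has the rows swapped, the factor from (2.1) is the transpose $\begin{pmatrix}1 & q_k \\ 0 & 1\end{pmatrix}$, and the recurrences collapse the top row to $(\beta_k, \alpha_k)$.

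To deduce (3.2) from (3.1), I would observe that for any integers $\beta,\alpha$,
\[
(\beta,\alpha)\,\widehat{{\cal X}_0} = (\beta,\ m\alpha - a\beta) = \Bigl(\beta,\ m\beta\bigl(\tfrac{\alpha}{\beta} - \lambda\bigr)\Bigr),
\]
and apply this row by row to both cases of (3.1), which immediately produces (3.2).

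The only delicate point in the argument is the row bookkeeping: the parity of $k$ dictates whether the $(k-1)$-st convergent data occupies the top or the bottom row, and it is precisely this parity-dependent swap that accounts for the transpose appearing in the even-step factor of (2.1). Once this is tracked carefully, the convergent recursion and the matrix recursion line up automatically, and the remainder of the verification is routine $2\times 2$ arithmetic.
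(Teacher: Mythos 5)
Your proposal is correct and follows essentially the same route as the paper: induction on $k$ via the s-state recursion, with the convergent recurrences $\alpha_k=q_k\alpha_{k-1}+\alpha_{k-2}$, $\beta_k=q_k\beta_{k-1}+\beta_{k-2}$ absorbing the factor $\begin{pmatrix}1&0\\ q_k&1\end{pmatrix}$ (or its transpose) according to parity, and (3.2) obtained by the explicit row computation $(\beta,\alpha)\widehat{{\cal X}_0}=(\beta,\,m\alpha-a\beta)$. The only cosmetic difference is that you introduce the convention $\alpha_{-1}=1,\ \beta_{-1}=0$ to unify the base case, whereas the paper simply verifies $k=1$ and $k=2$ directly.
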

\begin{proof}
Recall that $\alpha_k, \beta_k$
can be represented in a recursive manner as
\[
\alpha_0 = 0, \alpha_1 = 1, \alpha_2 = q_2, \cdots, \alpha_k  = q_k\alpha_{k-1}+\alpha_{k-2},
\]
\[
\beta_0=1, \beta_1=q_1, \beta_2 = q_1q_2+1, \cdots, \beta_k=q_k\beta_{k-1}+\beta_{k-2}.
\]
Now
\[
\widehat{{\cal X}_1}= \begin{pmatrix}1& 0\\q_1& 1\end{pmatrix}\widehat{{\cal X}_0}=\begin{pmatrix}\beta_0& \alpha_0\\ \beta_1& \alpha_1\end{pmatrix}\widehat{{\cal X}_0},
\]
\[
\widehat{{\cal X}_2}= \begin{pmatrix}1& 0\\q_2& 1\end{pmatrix}^{\top}\widehat{{\cal X}_1}=\begin{pmatrix}1& 0\\q_2& 1\end{pmatrix}^{\top}\begin{pmatrix}1& 0\\q_1& 1\end{pmatrix}\widehat{{\cal X}_0}=\begin{pmatrix}\beta_2& \alpha_2\\ \beta_1& \alpha_1\end{pmatrix}\widehat{{\cal X}_0}.
\]
In general, we use induction by assuming that (\ref{eq:3.1}) holds for $k-1$. If $k$ is odd, then
\[
\widehat{{\cal X}_k}= \begin{pmatrix}1& 0\\q_k&1\end{pmatrix}\widehat{{\cal X}_{k-1}}=\begin{pmatrix}1& 0\\q_k&1\end{pmatrix}\begin{pmatrix} \beta_{k-1}& \alpha_{k-1}\\ \beta_{k-2}&\alpha_{k-2} \end{pmatrix}\widehat{{\cal X}_{0}}
 = \begin{pmatrix} \beta_{k-1}& \alpha_{k-1}\\ \beta_{k}&\alpha_{k} \end{pmatrix}\widehat{{\cal X}_{0}}.
\]
The case that $k$ is even can be checked in the same way.

To see (\ref{eq:3.2}), for example, for the case  that $k$ is even, we just note that
\[
\widehat{{\cal X}_k}= \begin{pmatrix} \beta_{k}& \alpha_{k}\\ \beta_{k-1}&\alpha_{k-1} \end{pmatrix}\widehat{{\cal X}_{0}} =
\begin{pmatrix} \beta_{k}& m\alpha_{k}-a\beta_k\\ \beta_{k-1}&m\alpha_{k-1}-a\beta_{k-1} \end{pmatrix}.
\]
\end{proof}

The relations in theorem \ref{thm:3.1} yield
interesting consequences, some of them
seem to be new, some of them imply fundamental facts of continued fractions.
We put them in the following remarks.

{\bf Remark 1.} From (\ref{eq:3.1}), we see that the connection between convergents $\frac{\alpha_k}{\beta_k}$ and the quotients (of long division in the algorithm) $q_k$
can be described in a neat  matrix form:
If $k$ is odd,
\begin{equation}\label{mat:eq1}
\begin{pmatrix}\beta_{k-1}& \alpha_{k-1}\\ \beta_k&\alpha_k \end{pmatrix}=
\begin{pmatrix}1& 0\\ q_k& 1 \end{pmatrix}\begin{pmatrix}1& 0\\ q_{k-1}&1 \end{pmatrix}^{\top}\cdots \begin{pmatrix}1& 0 \\ q_1& 1 \end{pmatrix},
\end{equation}
if $k$ is even,
\begin{equation}\label{mat:eq2}
\begin{pmatrix}\beta_{k}& \alpha_k\\ \beta_{k-1}&\alpha_{k-1} \end{pmatrix}=
\begin{pmatrix}1& 0\\ q_k& 1 \end{pmatrix}^{\top}\begin{pmatrix}1& 0\\ q_{k-1}&1 \end{pmatrix}\cdots \begin{pmatrix}1& 0 \\ q_1& 1 \end{pmatrix}.
\end{equation}
The well-known identity
\begin{equation}\label{det:eq1}
\alpha_{k}\beta_{k-1}-\alpha_{k-1}\beta_{k}=(-1)^{k-1}
\end{equation}
is immediately implied as the determinants of (\ref{mat:eq1}) and (\ref{mat:eq2}) are $1$.

{\bf Remark 2.}
It is also remarked that (\ref{eq:3.2}) can be used to derive rich information about continued fractions,
 including those important identities and inequalities.
We shall discuss several of them.
\begin{enumerate}
\item When $k$ is odd,   (\ref{eq:3.2}) tells us that $m\beta_{k-1}\left(\frac{\alpha_{k-1}}{\beta_{k-1}}-\lambda\right)=-x_{12}^{(k)}<0$ and
$m\beta_{k}\big(\frac{\alpha_{k}}{\beta_{k}}-\lambda \big)=x_{22}^{(k)}>0$, which gives $\frac{\alpha_{k-1}}{\beta_{k-1}}<\lambda< \frac{\alpha_{k}}{\beta_{k}}$.
Examine consecutive s-states
\scriptsize{\[
\widehat{{\cal X}_k}=\begin{pmatrix}\beta_{k-1}& m\beta_{k-1}\big(\frac{\alpha_{k-1}}{\beta_{k-1}}-\lambda\big)\\ \beta_k& m\beta_{k}\big(\frac{\alpha_{k}}{\beta_{k}}-\lambda\big) \end{pmatrix}  \mbox{ and }
\widehat{{\cal X}_{k+1}}=\begin{pmatrix}\beta_{k+1}& m\beta_{k+1}\big(\frac{\alpha_{k+1}}{\beta_{k+1}}-\lambda\big)\\ \beta_k& m\beta_{k}\big(\frac{\alpha_{k}}{\beta_{k}}-\lambda\big) \end{pmatrix}.
\]}\normalsize{}
It is seen that $\det(\widehat{{\cal X}_k})=m=\det(\widehat{{\cal X}_{k+1}})$ by using Qin's invariant, therefore
\[
m\beta_{k-1}\beta_{k}(\frac{\alpha_{k}}{\beta_{k}}-\frac{\alpha_{k-1}}{\beta_{k-1}})=m\beta_{k+1}\beta_{k}(\frac{\alpha_{k}}{\beta_{k}}-\frac{\alpha_{k+1}}{\beta_{k+1}})
\]
Since $\beta_{k-1}<\beta_{k+1}$, the inequality $\frac{\alpha_{k-1}}{\beta_{k-1}}<\frac{\alpha_{k+1}}{\beta_{k+1}}$ holds.  Discussing even $k$ in a similar manner, the following
famous alternative relation is then obtained:
\[
\frac{\alpha_{2}}{\beta_{2}}<\frac{\alpha_{4}}{\beta_{4}}<\cdots \le \lambda<\cdots <\frac{\alpha_{3}}{\beta_{3}}<
\frac{\alpha_{1}}{\beta_{1}}.
\]
\item By (\ref{eq:3.2}), we see that the approximation error $ \frac{\alpha_{k-1}}{\beta_{k-1}}-\lambda$ is naturally embedded in $x_{12}^{(k)}$
or $x_{22}^{(k)}$  (depending on whether $k$ is odd or not ). From Qin's invariant,
\[
m\beta_k\beta_{k-1}\left(\big|\frac{\alpha_{k-1}}{\beta_{k-1}}-\lambda\big|+\big|\frac{\alpha_{k}}{\beta_{k}}-\lambda\big|\right)=m,
\]
we obtain
\[
\big|\frac{\alpha_{k-1}}{\beta_{k-1}}-\lambda\big|+\big|\frac{\alpha_{k}}{\beta_{k}}-\lambda\big|=\frac{1}{\beta_{k-1}\beta_{k}}<\frac{1}{\beta_{k-1}^2}.
\]
In particular, since $\frac{1}{2\beta_{k-1}^2}+\frac{1}{2\beta_{k}^2}\ge \frac{1}{2\beta_{k-1}\beta_{k}}$, we have derived two important approximations in 
continued fraction theory that
\[
\big|\frac{\alpha_{k-1}}{\beta_{k-1}}-\lambda\big|<\frac{1}{\beta_{k-1}^2} 
\]
always holds true, and one of the following
\[
\big|\frac{\alpha_{k-1}}{\beta_{k-1}}-\lambda\big|<\frac{1}{2\beta_{k-1}^2} \mbox{ and } \big|\frac{\alpha_{k}}{\beta_{k}}-\lambda\big|<\frac{1}{2\beta_{k}^2}
\]
holds true.
\end{enumerate}

\section{Two-dimensional Lattices}\label{sec:Latt}
We now turn to revealing the lattice-theoretic nature of Qin's algorithm.
With fixed integers $a, m$ such that $1<a<m, \gcd(a, m)=1$,
we can form a two-dimensional lattice
$\Lambda(a,m)\subset\R^2$ as
\[
\Lambda(a,m):=\{(x,y)\in\Z\times \Z  \ | \ ax+y\equiv 0\pmod m\}.
\]
This is a common example of two-dimensional lattices
and has been used in many applications, see \cite{glv}. The following result
demonstrates how the ancient construction of Qin gives fundamental mathematical characteristics of the above defined lattice.
\begin{thm}
Every s-state $\widehat{{\cal X}_k}$ is a basis of $\Lambda(a,m)$. In particular,
the volume of the lattice $\Lambda(a,m)$ is $m$.
\end{thm}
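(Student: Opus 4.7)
The plan is to proceed by induction on $k$, using the recurrence (\ref{eq:2.1}) together with Qin's invariant (\ref{eq:2.2}).

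First I would handle the base case $k=0$. The initial s-state $\widehat{{\cal X}_0}=\begin{pmatrix}1 & -a\\ 0 & m\end{pmatrix}$ has rows $(1,-a)$ and $(0,m)$, and both clearly satisfy the congruence $ax+y\equiv 0\pmod m$, so they lie in $\Lambda(a,m)$. To see they form a basis, take any $(x,y)\in \Lambda(a,m)$; then $y+ax=km$ for some $k\in\Z$, so $(x,y)=x\cdot(1,-a)+k\cdot(0,m)$, giving a $\Z$-linear expression. Uniqueness follows because the matrix has nonzero determinant, so $\widehat{{\cal X}_0}$ is indeed a basis.

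Next I would carry out the induction step. According to (\ref{eq:2.1}), each s-state is obtained from its predecessor by left multiplication by either $\begin{pmatrix}1 & 0\\ q_k & 1\end{pmatrix}$ or its transpose. Both of these are integer matrices with determinant $1$, hence unimodular. Left multiplication by a unimodular integer matrix is an elementary row operation that preserves the $\Z$-row-span of a matrix, and also takes bases of a lattice to bases. Therefore, if the rows of $\widehat{{\cal X}_{k-1}}$ form a basis of $\Lambda(a,m)$, so do the rows of $\widehat{{\cal X}_k}$. By induction, every $\widehat{{\cal X}_k}$ is a basis of $\Lambda(a,m)$.

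Finally, the volume statement drops out immediately. For any basis matrix $B$ of a lattice, the volume is $|\det B|$. Qin's invariant (\ref{eq:2.2}) asserts $x_{11}^{(k)}x_{22}^{(k)}+x_{12}^{(k)}x_{21}^{(k)}=m$, which is precisely $\det(\widehat{{\cal X}_k})$ given the sign pattern of the s-state. Hence $\mathrm{vol}(\Lambda(a,m))=m$. There is no real obstacle here — the result is a clean combination of the two ingredients already established in Section \ref{sec:QinAlg}: the unimodular-factor structure of the recurrence, and Qin's determinantal invariant. The only subtlety worth stating explicitly is that one must verify both recurrence matrices (and their transposes) are unimodular over $\Z$, which is obvious once stated.
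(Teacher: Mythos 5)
Your proposal is correct and follows essentially the same route as the paper: establish that the rows of $\widehat{{\cal X}_0}$ span $\Lambda(a,m)$ via the decomposition $(x,y)=x(1,-a)+t(0,m)$, propagate the basis property through the unimodular recurrence matrices, and read off the volume from Qin's invariant $\det\widehat{{\cal X}_k}=m$. The only difference is cosmetic --- you phrase the unimodular step as a formal induction where the paper simply cites the product of unimodular factors.
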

\begin{proof}
The rows of $\widehat{{\cal X}_0}$ form a basis of $\Lambda(a,m)$.
In fact, for any $(x,y)\in \Lambda(a,m)$, let $t$ be the integer such that $ax+y=tm$, then
\[
(x,y)=x(1, -a) + t(0, m).
\]

For every $k$, the rows of $\widehat{{\cal X}_k}$ form a basis of $\Lambda(a,m)$.
In fact, $\widehat{{\cal X}_k}$ is obtained from $\widehat{{\cal X}_0}$ by
multiplying  it with a serial
unimodular matrices of the form $\begin{pmatrix}1& q\\ 0&1\end{pmatrix}$ or
$\begin{pmatrix}1& 0\\ q&1\end{pmatrix}$.

We have seen from the previous section that $\det\widehat{{\cal X}_k} = m$, so the volume of $\Lambda(a,m)$ is $m$.
\end{proof}
We should note that $\begin{pmatrix} a^{-1}\pmod m & -1\\ m& 0 \end{pmatrix}$
is also a basis of $\Lambda(a,m)$, since in section \ref{sec:QinAlg} we have derived
\[
\begin{pmatrix} a^{-1}\pmod m & -1\\ m& 0 \end{pmatrix}=\begin{pmatrix} 1 & -0\\ x_{22}^{(N)}& 1 \end{pmatrix}\widehat{{\cal X}_N}.
\]

\subsection{Shortest Vectors of $\Lambda(a,m)$}
One of the most important topics for lattices is to find shortest lattice vectors.
In this part, we shall first study the possibility of whether
 a shortest vector can be obtained from
an s-state $\widehat{{\cal X}_k}$ of Qin's algorithm.

We start with some basic facts. It is remarked that there  are
some trivial cases that one can easily get a shortest vector of
$\Lambda(a,m)$. From earlier discussion, we know that $\begin{pmatrix}1& -a\\ 0&m\end{pmatrix}$ and
$\begin{pmatrix}a^{-1}& -1\\ m&0\end{pmatrix}$ are both bases of
$\Lambda(a,m)$, where $a^{-1}$ is understood as $a^{-1}\pmod m$. If $a$ or $a^{-1}$ is small, then we can easily get a shortest vector.
\begin{prop}
\begin{enumerate}
\item If $a^2 < m$, then $(1, -a)$ is a shortest vector of $\Lambda(a,m)$.
\item If $(a^{-1})^2 < m$,  then $(a^{-1}, -1)$ is a shortest vector of $\Lambda(a,m)$.
\end{enumerate}
\end{prop}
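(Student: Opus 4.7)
The plan is to show, for a nonzero $(x,y)\in\Lambda(a,m)$, the bound $x^{2}+y^{2}\ge 1+a^{2}$ in case (1) and $x^{2}+y^{2}\ge 1+(a^{-1})^{2}$ in case (2). The candidate vectors $(1,-a)$ and $(a^{-1},-1)$ both lie in $\Lambda(a,m)$ (the first by inspection, the second because $a\cdot a^{-1}\equiv 1\pmod m$), so the entire content of the proposition is the lower bound on lengths of all other nonzero lattice vectors.

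For part (1) I would apply the Cauchy-Schwarz inequality to the pair $(a,1)$ and $(x,y)$ in $\R^{2}$, obtaining
\[
(ax+y)^{2}\le (a^{2}+1)(x^{2}+y^{2}).
\]
The lattice condition says $ax+y$ is an integer multiple of $m$, so I would split into two cases. If $ax+y=0$, then $y=-ax$ and $x\ne 0$ (else the whole vector would vanish), so $x^{2}+y^{2}=x^{2}(1+a^{2})\ge 1+a^{2}$. If $ax+y\ne 0$, then $|ax+y|\ge m$; since $a^{2}$ and $m$ are integers, the hypothesis $a^{2}<m$ upgrades to $m\ge a^{2}+1$, hence $(ax+y)^{2}\ge m^{2}\ge (a^{2}+1)^{2}$, and dividing the Cauchy-Schwarz estimate by $a^{2}+1$ yields $x^{2}+y^{2}\ge a^{2}+1$. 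Either way, $(1,-a)$ achieves the minimum.

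For part (2) I would reduce to part (1) by the isometry $\sigma\colon (x,y)\mapsto (y,x)$ of $\R^{2}$. Writing $b=a^{-1}\bmod m$, the congruence $ax+y\equiv 0\pmod m$ becomes, after multiplication by $b$, the congruence $x+by\equiv 0\pmod m$, so $\sigma$ maps $\Lambda(a,m)$ isometrically onto $\Lambda(b,m)$ and sends $(b,-1)$ to $(-1,b)$, which has the same length as $(1,-b)$. Since the hypothesis $(a^{-1})^{2}<m$ reads $b^{2}<m$, part (1) applied to $\Lambda(b,m)$ identifies $(1,-b)$ as a shortest vector there, and pulling back by $\sigma$ makes $(b,-1)=(a^{-1},-1)$ a shortest vector of $\Lambda(a,m)$. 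I do not foresee a genuine obstacle; the one subtle point is that Cauchy-Schwarz needs the strengthened inequality $m\ge a^{2}+1$ rather than the mere $m>a^{2}$, but this upgrade is free because both quantities are integers.
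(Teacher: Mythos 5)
Your proof is correct, but it takes a genuinely different route from the paper's. The paper argues by contradiction: it posits a nonzero vector $v=(k_1,\,k_2m-k_1a)$ with $\|v\|^2\le a^2$, normalizes $k_1>0$, forces $k_2=1$ from the bound $k_1\le a$, and then splits on $k_1=a$ versus $k_1<a$ to reach a contradiction; part (2) is dismissed as ``similar.'' You instead prove the uniform lower bound $x^2+y^2\ge a^2+1$ for every nonzero lattice vector directly, via Cauchy--Schwarz applied to $(a,1)$ and $(x,y)$ together with the integrality upgrade $m>a^2\Rightarrow m\ge a^2+1$, treating the degenerate direction $ax+y=0$ separately. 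Both arguments are sound; yours is shorter, avoids the sign and case bookkeeping over $k_1,k_2$, and isolates exactly where the hypothesis $a^2<m$ is used, while the paper's stays within bare integer arithmetic in the same style as its other proofs. Your handling of part (2) by the explicit isometry $(x,y)\mapsto(y,x)$ carrying $\Lambda(a,m)$ onto $\Lambda(a^{-1}\bmod m,\,m)$ is also a concrete improvement over the paper's ``the proof is similar,'' and the one prerequisite for reusing part (1) there --- that $b=a^{-1}\bmod m$ satisfies $1<b<m$ --- holds since $a>1$ rules out $b=1$.
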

\begin{proof}
If not, then there are $k_1, k_2\in \Z$ that form a nonzero vector $v=(k_1, k_2m-k_1a)$ with $\|v\|<\sqrt{a^2+1}$. This implies that
$\|v\|^2\le a^2$.
Without loss of generality, we assume $k_1>0$. Since $(k_2m-k_1a)^2\le a^2$, we conclude that $k_2>0$.
Note that $k_1\le a$, so $k_2m-k_1a\ge k_2m -a^2\ge (k_2-1)m +(m-a^2)$ forces that $k_2=1$.

Now we have a simplified inequality
\[
k_1^2+(m-k_1a)^2\le a^2.
\]

If $k_1=a$, then $m-k_1a=m-a^2$ must be zero. This is against our assumption.

If $k_1<a$, then $m-k_1a\ge m-(a-1)a = a+(m-a^2)> a$. This also  violates the above inequality.

So $(1, -a)$ must be a shortest vector.

The proof of $(a^{-1}, -1)$ being a shortest vector under the assumption $(a^{-1})^2 < m$ is similar.
\end{proof}

Now we prove that one of  the shortest vectors
of $\Lambda(a,m)$ can be obtained from  an s-state.
This surprising result  demonstrates
that Qin made a natural choice on the state variables.
\begin{thm}\label{thm:sv}
There exists an s-state $\widehat{{\cal X}_{k}}=\begin{pmatrix}\widehat{v_1}^{(k)}\\\widehat{v_2}^{(k)}\end{pmatrix}$ such that
the set
\[
\{\widehat{v_1}^{(k)}, \widehat{v_2}^{(k)}, \widehat{v_1}^{(k)}+\widehat{v_2}^{(k)}, \widehat{v_1}^{(k)}-\widehat{v_2}^{(k)}\}
\]
contains a shortest vector.
\end{thm}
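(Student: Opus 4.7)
The plan is to combine Theorem \ref{thm:3.1} with the classical best approximation theorem of continued fractions. In fact I expect to prove something strictly stronger than the statement: the shortest vector already coincides, up to sign, with $\widehat{v_1}^{(k)}$ or $\widehat{v_2}^{(k)}$ for some $k$, so the sum/difference vectors in the given set are not actually required.

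First, reading off (\ref{eq:3.2}), every row of every s-state has the form $c_n := (\beta_n,\, m\alpha_n - a\beta_n)$, the $n$-th convergent vector of $\lambda = a/m$: when $k$ is odd, $(\widehat{v_1}^{(k)},\,\widehat{v_2}^{(k)}) = (c_{k-1},\,c_k)$, and when $k$ is even these roles swap. In particular, every $c_n$ ($0 \le n \le N$) appears as a row of some $\widehat{{\cal X}_k}$. It therefore suffices to show that a shortest vector of $\Lambda(a,m)$ must equal $\pm c_{n^*}$ for some $n^*$.

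For this, let $w = (w_1,w_2)$ be a shortest vector; replacing $w$ by $-w$ if needed, assume $w_1 \ge 0$. The case $w_1 = 0$ forces $|w_2| \ge m$, which is beaten by $c_0 = (1,-a)$, so $w_1 \ge 1$. Let $n$ be the unique index with $\beta_n \le w_1 < \beta_{n+1}$. Since $w \in \Lambda(a,m)$, we have $w_2 = tm - aw_1$ for some integer $t$; take $t$ minimizing $|w_2|$. The best approximation theorem of the second kind applied to $\lambda = a/m$ then gives
\[
|w_2| \ =\ \min_{p \in \Z}|a w_1 - pm|\ \ge\ |a\beta_n - m\alpha_n|,
\]
with equality only when $w_1 = \beta_n$. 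Together with $w_1 \ge \beta_n$, this yields $\|w\|^2 \ge \|c_n\|^2$, and equality forces $w = \pm c_n$. Hence the shortest vector is $\pm c_{n^*}$ for some $n^*$, and by the preceding paragraph it is a row of some $\widehat{{\cal X}_k}$, which in particular lies in the four-element set of the theorem.

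The main obstacle I expect is supplying a self-contained justification of the best approximation inequality, which the paper has not previously recorded. One can either cite it from any standard reference on continued fractions, or prove it in a few lines from the unimodular identity $\alpha_n\beta_{n-1} - \alpha_{n-1}\beta_n = (-1)^{n-1}$ already noted in (\ref{det:eq1}), combined with a short case analysis according to whether $w_1 = \beta_n$ or $w_1$ lies strictly in $(\beta_n,\beta_{n+1})$. Apart from this classical input, the rest of the argument is a direct transcription of the structural results of Sections \ref{sec:QinAlg} and \ref{sec:CF}.
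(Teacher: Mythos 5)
Your proof is correct, but it takes a genuinely different route from the paper's. The paper fixes a shortest vector $(x_0,y_0)$, derives $\left|\frac{a}{m}-\frac{(y_0+ax_0)/m}{x_0}\right|<\frac{1}{x_0^2}$ from Qin's invariant, disposes of the non-reduced case via the $\frac{1}{2v^2}$-criterion, and then invokes Lang's theorem on intermediate fractions (Lemma \ref{lem:lang}, part 2): the fraction is either a convergent or a mediant $\frac{\alpha_j\pm\alpha_{j-1}}{\beta_j\pm\beta_{j-1}}$, which is exactly why the sums and differences $\widehat{v_1}^{(k)}\pm\widehat{v_2}^{(k)}$ appear in the statement. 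You instead invoke the law of best approximation of the second kind, $\min_p|aw_1-pm|\ge|a\beta_n-m\alpha_n|$ for $\beta_n\le w_1<\beta_{n+1}$, which immediately gives $\|w\|\ge\|c_n\|$ and hence that some convergent row is itself shortest; this sidesteps both the gcd reduction (the second-kind inequality does not care whether the fraction is reduced) and the mediant case, and yields a strictly sharper conclusion than the theorem asks for. The trade-off is that you must supply the best-approximation inequality, which the paper never records; as you note, it follows in a few lines from the unimodular identity (\ref{det:eq1}) by writing $(p,q)$ in the basis $(\alpha_n,\beta_n),(\alpha_{n+1},\beta_{n+1})$ and using the alternating signs of $\beta_j\lambda-\alpha_j$. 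Two small points to tidy up: you should observe that $w_1<m=\beta_{N+1}$ (since $\|w\|\le\|c_0\|=\sqrt{1+a^2}<m$), so the index $n$ exists and satisfies $n\le N$, making $c_n$ an actual row of some $\widehat{{\cal X}_k}$; and your clause ``equality only when $w_1=\beta_n$'' can fail at the last level of a rational expansion (where $\beta_{N+1}\lambda-\alpha_{N+1}=0$), but this only affects your side remark that \emph{every} shortest vector is $\pm c_n$, not the existence statement the theorem requires.
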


The following lemma will be used in proving the theorem.
 The first part of the lemma illustrates a
well-known result in continued fractions.
The second part of the lemma is a result of Lang
(\cite{lang}, Chapter 1, Theorem 10), which concerns the intermediate fractions of
Khinchin \cite{khin}.
\begin{lem}\label{lem:lang}
 Let $\lambda\in\R$ and $\{\frac{\alpha_j}{\beta_j}:j = 0, 1, \cdots \}$ is the sequence of convergents
of the continued fraction expansion of $\lambda$.
\begin{enumerate}
\item If there are integers $u, v$ such that
\[
\bigg|\lambda -\frac{u}v\bigg| \le \frac{1}{2v^2},
\]
then $\sds \frac{u}v = \frac{\alpha_j}{\beta_j}$ for some $j$.
\item If there are integers $u, v$ such that
\[
\bigg|\lambda -\frac{u}v\bigg| \le \frac{1}{v^2},
\]
then $\sds \frac{u}v = \frac{\alpha_j}{\beta_j}$ or $\sds \frac{u}v
= \frac{\alpha_{j}\pm \alpha_{j-1}}{\beta_{j}\pm \beta_{j-1}}$, for some $j$.
\end{enumerate}
\end{lem}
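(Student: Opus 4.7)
My plan is to treat both parts by parameterizing $\lambda$ via the continued fraction of $u/v$ and reading off from the hypothesis where $u/v$ lies in the convergent/semiconvergent hierarchy of $\lambda$. After reducing to $\gcd(u,v)=1$, $v>0$, I would write $u/v=[c_0;c_1,\ldots,c_n]$ and, via the identity $[c_0;\ldots,c_n]=[c_0;\ldots,c_n-1,1]$, adjust the parity of $n$ so that $(-1)^n(\lambda-u/v)>0$. Letting $p_k/q_k$ denote the convergents of this finite expansion (so $p_n=u$, $q_n=v$, and $p_nq_{n-1}-p_{n-1}q_n=(-1)^{n-1}$), the unique $t$ satisfying $\lambda=(p_nt+p_{n-1})/(q_nt+q_{n-1})$ is
\[
t = -\frac{q_{n-1}}{v} + \frac{1}{v^{2}|\lambda-u/v|}.
\]
The parity choice makes the second summand positive, and $q_{n-1}<v$ gives $-q_{n-1}/v>-1$.

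For Part 1, the hypothesis $|\lambda-u/v|\le 1/(2v^{2})$ forces the second summand to be $\ge 2$, so $t>1$. Then $\lambda=[c_0;c_1,\ldots,c_n,\lfloor t\rfloor,\ldots]$ is a bona fide continued-fraction extension of the CF of $u/v$, whence $u/v=p_n/q_n$ equals a convergent $\alpha_j/\beta_j$ of $\lambda$.

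For Part 2, the hypothesis gives only $t>1-q_{n-1}/v>0$. If $t>1$, Part 1 applies. Otherwise $0<t\le 1$; setting $s=1/t\ge 1$, the identity rewrites as $\lambda=(p_{n-1}s+p_n)/(q_{n-1}s+q_n)$. This shows the continued fractions of $\lambda$ and $u/v$ agree through index $n-1$, and the $n$-th partial quotient of $\lambda$ is $e=c_n+\lfloor s\rfloor$, so $u/v=(c_n\alpha_{n-1}+\alpha_{n-2})/(c_n\beta_{n-1}+\beta_{n-2})$ is the $k=c_n$ semiconvergent of $\lambda$ at position $n$. The plan is to match this against a boundary form $(\alpha_j\pm\alpha_{j-1})/(\beta_j\pm\beta_{j-1})$: when $c_n=1$ this is $(\alpha_{n-1}+\alpha_{n-2})/(\beta_{n-1}+\beta_{n-2})$ with $j=n-1$; and when $c_n\ge 2$, the recurrence yields $q_{n-1}\le v/2$, and the explicit identity $|\lambda-u/v|=s/(v(q_{n-1}s+v))$ together with the hypothesis forces $s\le v/(v-q_{n-1})\le 2$, hence $\lfloor s\rfloor=1$ and $u/v=(\alpha_n-\alpha_{n-1})/(\beta_n-\beta_{n-1})$.

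The main obstacle is Part 2's case analysis, especially the boundary $s=2$ (which corresponds to $|\lambda-u/v|=1/v^{2}$ exactly); there $u/v$ can sit awkwardly between the convergent and the allowed boundary semiconvergent, and securing the correct labelling may require tightening the inequality or swapping the parity of the chosen CF of $u/v$. A conceptually cleaner alternative, if one is willing to cite it, is to invoke directly the classical theorem that $|\lambda-u/v|\le 1/v^{2}$ forces $u/v$ to be a first-kind best approximation of $\lambda$, and then to appeal to the known characterization of such approximations as the convergents together with the listed intermediate fractions.
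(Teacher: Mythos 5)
The paper never proves this lemma: part~1 is invoked as ``a well-known result in continued fractions'' and part~2 is cited to Lang (\cite{lang}, Chapter~1, Theorem~10) via Khinchin's intermediate fractions. So your proposal is not competing with an argument in the paper --- it supplies the textbook proof that the authors chose to cite, and it is essentially the standard one. The computations check out: your formula $t=-q_{n-1}/v+1/(v^{2}|\lambda-u/v|)$ follows from $p_{n-1}q_n-q_{n-1}p_n=(-1)^{n}$ with your parity normalization; $t>1$ does force $u/v$ to be a convergent; and in the remaining case $s=1/t\ge 1$ gives $\lambda=[c_0;c_1,\ldots,c_{n-1},c_n+s]$, exhibiting $u/v$ as the $c_n$-th intermediate fraction at position $n$, after which the dichotomy $c_n=1$ versus $c_n\ge 2$ (the latter yielding $q_{n-1}\le v/2$ and $s\le v/(v-q_{n-1})\le 2$) places $u/v$ in the allowed set. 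The obstacle you flag at $s=2$ is genuine, but it is a defect of the lemma's statement rather than of your method: with the non-strict bound $|\lambda-u/v|\le 1/v^{2}$ the conclusion can actually fail for the canonical expansion of a rational $\lambda$ --- take $\lambda=1/4=[0;4]$ and $u/v=1/2$, where $|\lambda-u/v|=1/4=1/v^2$ yet $1/2$ is neither a convergent nor of the form $(\alpha_j\pm\alpha_{j-1})/(\beta_j\pm\beta_{j-1})$ unless one switches to the alternative expansion $[0;3,1]$ --- and a similar convention-dependence afflicts part~1 at $v=1$, $|\lambda-u|=1/2$. Since the paper only ever applies the lemma with strict inequalities (see (\ref{eq:contF}) and the strict bound $1/(2(x_0/d)^2)$ used for part~1), your argument, which is airtight in the strict case, covers everything the application requires; to prove the lemma exactly as stated you would need either to replace $\le$ by $<$ or to fix the convention for the terminal partial quotient of a rational $\lambda$. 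You should also dispose explicitly of the trivial case $\lambda=u/v$, where the defining formula for $t$ degenerates.
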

A proof of theorem \ref{thm:sv} goes as follows.
\begin{proof}
Let $(x_0, y_0)$ be a nonzero shortest vector of $\Lambda(a,m)$. By multiplying $-1$ if
necessary, we may assume $x_0>0$.

Note that there must be a $k>0$ such that
\[
\beta_{k-1}\le x_0<\beta_k.
\]
Since $ (\beta_{k}, m\alpha_{k}-a\beta_k)$ and $(\beta_{k-1}, m\alpha_{k-1}-a\beta_{k-1})$ are the
two row vectors of 
$\widehat{{\cal X}_k}=\begin{pmatrix}x_{11}^{(k)}&-x_{12}^{(k)}\\x_{21}^{(k)}&x_{22}^{(k)}\end{pmatrix}$,
from 
$x_0^2+y_0^2\le \beta_{k-1}^2+(m\alpha_{k-1}-a\beta_{k-1})^2$,
we see that $|y_0|\le |m\alpha_{k-1}-a\beta_{k-1}|$. Now from Qin's invariant
\[
m=x_{11}^{(k)}x_{22}^{(k)}+x_{12}^{(k)}x_{21}^{(k)}=
\beta_k|m\alpha_{k-1}-a\beta_{k-1}|+\beta_{k-1}|m\alpha_{k}-a\beta_{k}|,
\]
we obtain the inequality
\[
|x_0y_0|\le \beta_k|m\alpha_{k-1}-a\beta_{k-1}|<m.
\]
Since $(x_0,y_0)\in \Lambda(a,m)$,  $\frac{y_0+ax_0}m$ is an integer. Therefore we have the following estimation
\begin{equation}\label{eq:contF}
\left|\frac{a}m -\frac{\frac{y_0+ax_0}m}{x_0}\right|=\left|\frac{ax_0-(y_0+ax_0)}{mx_0}\right|=\left|\frac{y_0}{mx_0}\right|<\frac{1}{x_0^2}.
\end{equation}

Let $d=\gcd(x_0, \frac{y_0+ax_0}m)$. If $d>1$, then (\ref{eq:contF}) becomes
\[
\left|\frac{a}m -\frac{\frac{y_0+ax_0}{dm}}{\frac{x_0}d}\right|=\left|\frac{y_0}{mx_0}\right|<\frac{1}{x_0^2}=\frac{1}{d^2(\frac{x_0}d)^2}<\frac{1}{2(\frac{x_0}d)^2}.
\]
By the first part of lemma \ref{lem:lang}, there is a
$j$ such that $\frac{x_0}d=\beta_j$ and $\frac{y_0+ax_0}{dm}=\alpha_j$. Thus
\[
x_0^2+y_0^2=d^2(\beta_j^2+(m\alpha_j-a\beta_j)^2)> \beta_j^2+(m\alpha_j-a\beta_j)^2.
\]
This is impossible since $(\beta_j, m\alpha_j-a\beta_j)\in \Lambda(a,m)$
and $(x_0, y_0)$ is the  shortest.

Now we have $d=1$. The estimation (\ref{eq:contF}) and the second part of lemma \ref{lem:lang} assure us that either
$\frac{\frac{y_0+ax_0}m}{x_0}=\frac{\alpha_j}{\beta_j}$ or $\frac{\frac{y_0+ax_0}m}{x_0}=\frac{\alpha_{j}\pm \alpha_{j-1}}{\beta_{j}\pm \beta_{j-1}}$.

In the former case, we have $x_0=\beta_j$ and $\frac{y_0+ax_0}{m}=\alpha_j$. The assumption $\beta_{k-1}\le x_0<\beta_k$
implies that $k-1=j$ and hence  $(x_0,y_0)=(\beta_{k-1}, m\alpha_{k-1}-a\beta_{k-1})$ is a row vector of $\widehat{{\cal X}_k}$.

In the latter case, we note that the  expressions
$\frac{\alpha_{j}\pm \alpha_{j-1}}{\beta_{j}\pm \beta_{j-1}}$  are reduced, as
\[
(\alpha_{j}\pm \alpha_{j-1})\beta_{j-1}-(\beta_{j}\pm \beta_{j-1})\alpha_{j-1}=\pm 1.
\]
Therefore, we must have $x_0 =  \beta_{j}+ \beta_{j-1},  \frac{y_0+ax_0}{m}=\alpha_j+\alpha_{j-1}$ or $x_0 =  \beta_{j}-\beta_{j-1},
 \frac{y_0+ax_0}{m}=\alpha_j-\alpha_{j-1}$. Accordingly
 \[
 x_0 =  \beta_{j}+ \beta_{j-1}, \ y_0= (m\alpha_{j}-a\beta_{j})+ (m\alpha_{j-1}-a\beta_{j-1}),
\]
or
\[
x_0 =  \beta_{j}- \beta_{j-1}, \ y_0= (m\alpha_{j}-a\beta_{j})-(m\alpha_{j-1}-a\beta_{j-1}).
\]
Namely, $(x_0, y_0)$ is the sum or difference of the rows of $\widehat{{\cal X}_j}$.
\end{proof}

For finding a shortest vector, one may run Qin's algorithm and check every state according to theorem \ref{thm:sv}. This is 
quite efficient. 

We now illustrate another method for identifying a shortest vector.

It is a common heuristic that a basis with a smaller inner product
is more likely to contain shorter vectors.
Utilizing  the  inner product seems to be more suitable for
the situation   involving states of Qin's algorithm.
For an s-state $\widehat{{\cal X}_k}$ the inner product of its two row vectors
 $\widehat{v_1}^{(k)}$ and $\widehat{v_2}^{(k)}$
is denoted by the symbol ${\cal I}_k$, i.e.,
\[
{\cal I}_k=\langle \widehat{v_1}^{(k)}, \widehat{v_2}^{(k)}\rangle=x_{11}^{(k)}x_{21}^{(k)}-x_{12}^{(k)} x_{22}^{(k)}.
\]

Qin's procedure implies the following recursion formula, which also demonstrates that the inner products with respect to 
s-states of Qin's algorithm is monotone.
\begin{prop}\label{prop:Inner}
\[
{\cal I}_k=\left\{\begin{array}{ll} {\cal I}_{k-1}+q_k\|\widehat{v_1}^{(k)}\|^2 & \mbox{ if $k$ is odd},\\
              {\cal I}_{k-1}+q_k\|\widehat{v_2}^{(k)}\|^2 & \mbox{ if $k$ is even}. \end{array}\right.
\]
\end{prop}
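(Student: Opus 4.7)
The plan is to read the recursion (\ref{eq:2.1}) off of the unimodular factor that transforms $\widehat{{\cal X}_{k-1}}$ into $\widehat{{\cal X}_k}$, translate it into two scalar relations for the row vectors, and then expand the inner product bilinearly. No invariant or convergent information is needed here; it is a straightforward bookkeeping argument once the row-vector recursion is isolated.

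First I would split into the two parities. For odd $k$, the factor in (\ref{eq:2.1}) is $\begin{pmatrix}1 & 0\\ q_k & 1\end{pmatrix}$, which acts on the left and hence produces the row-level identities $\widehat{v_1}^{(k)} = \widehat{v_1}^{(k-1)}$ and $\widehat{v_2}^{(k)} = q_k\,\widehat{v_1}^{(k-1)} + \widehat{v_2}^{(k-1)}$. Substituting and using bilinearity of $\langle\cdot,\cdot\rangle$, one gets
\[
{\cal I}_k = \langle \widehat{v_1}^{(k)},\ q_k\,\widehat{v_1}^{(k-1)} + \widehat{v_2}^{(k-1)}\rangle = q_k\,\|\widehat{v_1}^{(k)}\|^2 + {\cal I}_{k-1},
\]
where the replacement $\widehat{v_1}^{(k-1)} = \widehat{v_1}^{(k)}$ is used in the first term. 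The even case uses the transpose factor, so $\widehat{v_2}^{(k)} = \widehat{v_2}^{(k-1)}$ and $\widehat{v_1}^{(k)} = \widehat{v_1}^{(k-1)} + q_k\,\widehat{v_2}^{(k-1)}$, and the same expansion produces ${\cal I}_k = {\cal I}_{k-1} + q_k\,\|\widehat{v_2}^{(k)}\|^2$.

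For the monotonicity claim in the statement, I would observe that $q_k \geq 1$ at every step of Qin's algorithm: the quotient is taken only in the branch where the denominator is strictly smaller than the numerator minus one, so $\lfloor (x_{\text{larger}}-1)/x_{\text{smaller}}\rfloor \geq 1$. Combined with the positivity of the squared norm of a nonzero lattice vector, the two recursions give ${\cal I}_k \geq {\cal I}_{k-1} + \|\widehat{v_i}^{(k)}\|^2 > {\cal I}_{k-1}$, so the sequence $\{{\cal I}_k\}$ is strictly increasing.

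There is no real obstacle: the entire argument is a two-line bilinear expansion after reading off the recursion (\ref{eq:2.1}), and the monotonicity is immediate. The only mild care needed is to notice that on the odd step the row that is unchanged is $\widehat{v_1}$, so $\|\widehat{v_1}^{(k)}\|^2 = \|\widehat{v_1}^{(k-1)}\|^2$ and either expression may be written in the recurrence; analogously for the even step with $\widehat{v_2}$. This is what makes both forms on the right-hand side of the stated recursion consistent and well-defined.
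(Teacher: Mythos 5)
Your proposal is correct and is essentially the paper's own argument: the paper expands ${\cal I}_k = x_{11}^{(k)}x_{21}^{(k)}-x_{12}^{(k)}x_{22}^{(k)}$ entry-by-entry using the update rules, which is exactly your bilinear expansion of $\langle \widehat{v_1}^{(k)}, \widehat{v_2}^{(k)}\rangle$ written in coordinates. Your added justification of strict monotonicity (via $q_k\ge 1$ and nonvanishing of the row vectors) is a correct supplement to a point the paper asserts without proof.
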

\begin{proof}
If $k$ is odd, $x_{11}^{(k-1)}=x_{11}^{(k)}, x_{12}^{(k-1)}=x_{12}^{(k)}$ and
$x_{21}^{(k)}=x_{21}^{(k-1)}+q_kx_{11}^{(k-1)}, x_{22}^{(k)}=x_{22}^{(k)}-q_kx_{12}^{(k)}$. Therefore
\begin{eqnarray*}
{\cal I}_k&=&x_{11}^{(k)}x_{21}^{(k)}-x_{12}^{(k)}x_{22}^{(k)}=x_{11}^{(k-1)}(x_{21}^{(k-1)}+q_kx_{11}^{(k-1)})-x_{12}^{(k-1)}(x_{22}^{(k-1)}-q_kx_{12}^{(k-1)})\\
&=&{\cal I}_{k-1}+q_k\|\widehat{v_1}^{(k-1)}\|^2 ={\cal I}_{k-1}+q_k\|\widehat{v_1}^{(k)}\|^2.
\end{eqnarray*}

If $k$ is even, $x_{11}^{(k)}=x_{11}^{(k-1)}+q_kx_{21}^{(k-1)}, x_{12}^{(k)}=x_{12}^{(k-1)}-q_kx_{22}^{(k-1)}$ and
$x_{21}^{(k)}=x_{21}^{(k-1)}, x_{22}^{(k)}=x_{22}^{(k)}$. Therefore
\begin{eqnarray*}
{\cal I}_k&=&x_{11}^{(k)}x_{21}^{(k)}-x_{12}^{(k)}x_{22}^{(k)}=x_{21}^{(k-1)}(x_{11}^{(k-1)}+q_kx_{21}^{(k-1)})-x_{22}^{(k-1)}(x_{12}^{(k-1)}-q_kx_{22}^{(k-1)})\\
&=&{\cal I}_{k-1}+q_k\|\widehat{v_2}^{(k-1)}\|^2 ={\cal I}_{k-1}+q_k\|\widehat{v_2}^{(k)}\|^2.
\end{eqnarray*}
\end{proof}

Assume that we are in a nontrivial situation that $(a^{-1}, -1)$ is not a shortest vector. We want to use
${\cal I}_k$ as an indication to get
a shortest vector.

From proposition \ref{prop:Inner}, $\{{\cal I}_k\}$ is an increasing sequence.
Note that ${\cal I}_0=-am<0$. If we know that ${\cal I}_N>0$, then there must be a $k_0$ such that
\[
{\cal I}_{k_0}<0, \ {\cal I}_{k_0+1}>0.
\]
This ensures us that
\[
|{\cal I}_{k_0}| = \min_{k}|{\cal I}_k| \quad \mbox{ or }  \quad |{\cal I}_{k_0+1}| = \min_{k}|{\cal I}_k|.
\]
A heuristic based on this is that $\widehat{{\cal X}_{k_0}}$ or $\widehat{{\cal X}_{k_0+1}}$ contains a shortest vector.

Now we need to work with the situation that ${\cal I}_N>0$.
It is interesting to see that we have ${\cal I}_N>0$
except for  the trivial case of $(a^{-1}, -1)$ being a shortest vector.
Now let us assume that $(a^{-1}, -1)$ is  not  the shortest.
For the final s-state $\widehat{{\cal X}_N}=\begin{pmatrix}x_{11}^{(N)}&-x_{12}^{(N)}\\ x_{21}^{(N)}&x_{22}^{(N)}\end{pmatrix}$, we know that
$x_{11}^{(N)}=a^{-1}\pmod m, \ x_{12}^{(N)}=1$. Note that since $N$ is an even number,
$\widehat{{\cal X}_N}=\begin{pmatrix}x_{11}^{(N)}&-1\\ x_{21}^{(N-1)}&x_{22}^{(N-1)}\end{pmatrix}=
\begin{pmatrix}\beta_N&-1\\  \beta_{N-1}&m\alpha_{N-1}-a\beta_{N-1}\end{pmatrix}$.

We now prove that ${\cal I}_N>0$. If not, suppose  ${\cal I}_N\le 0$, then $x_{11}^{(N-1)}x_{21}^{(N)}\le x_{22}^{(N)}$, i.e.,
$m\alpha_{N-1}-a\beta_{N-1} \ge \beta_N\beta_{N-1}$.
Since $\frac{\alpha_{N-1}}{\beta_{N-1}}-\frac{a}m\le \frac{1}{\beta_{N-1}\beta_{N}}$, we conclude that
\[
m\ge \beta_N^2\beta_{N-1}.
\]
Note that $\beta_N=a^{-1}\pmod m$, the above says that $(a^{-1})^2 <m$.
This  contradicts to the assumption that
$(a^{-1}, -1)$ is  not a shortest vector.

Our experiments show that the step  with ${\cal I}_{k_0}<0, \ {\cal I}_{k_0+1}>0$
generally appears in the  middle phase of the execution of Qin's algorithm,
so one may just check the states around half way of Qin's algorithm
for shortest vectors by examining the signs of inner products ${\cal I}_{k}$.

\

We present an example to conclude this section.

{\bf Example.}
Consider the lattice $\Lambda(38887,41130)$. The states of Qin's algorithm
(with respect to inputs $38887,41130$) as well as their inner products are
given in the following table.

\scriptsize{
\begin{center}
\begin{tabular}{|l|l|l|}
\hline
$k$        &    $\widehat{{\cal X}_k}$      & ${\cal I}_k$  \\
\hline
0&$\begin{pmatrix}1&-38887\\0&41130\end{pmatrix}$ & $-1599422310$\\
1& $\begin{pmatrix}1&-38887\\ 1&2243\end{pmatrix}$& $-87223540$\\
2&$\begin{pmatrix}18&-756\\1&2243\end{pmatrix}$& $-1695690$\\
3&$\begin{pmatrix}18&-756\\ 37 &731\end{pmatrix}$& $-551970$\\
4 &$\begin{pmatrix}55&-25\\ 37 &731\end{pmatrix}$& $ -16240$\\
5 &$\begin{pmatrix}55&-25\\ 1632& 6\end{pmatrix}$&  $89610$\\
6&$\begin{pmatrix}6583 &-1\\1632& 6\end{pmatrix}$& $ 10743450$\\
\hline
\end{tabular}
\end{center}
}\normalsize{}

It is seen that ${\cal I}_4<0, {\cal I}_5>0$. A shortest vector of  $\Lambda(38887,41130)$ is $(55,-25)$, which appears in
$\widehat{{\cal X}_4}$ (also in $\widehat{{\cal X}_5}$).

It is noted that the second shortest vector of $\Lambda(38887,41130)$ is $(257 , 631)$ whose representation
is $(4,1)\widehat{{\cal X}_4}$. It does not appear in any s-state $\widehat{{\cal X}_k}$.

\section*{Acknowledgement}
This work is partially supported by the National Natural Science Foundation of China (No. 12271306) and
National Key R\&D Program of China (No. 2018YFA0704702).


\begin{thebibliography}{9}
\itemsep=-2pt
\small
\bibitem{Bach} E. Bach and J. Shallit, Algorithmic Number Theory, {\sl MIT press}, 1994.
\bibitem{tian} J. Che, C. Tian, Y. Jiang and G. Xu, Algorithms for the Minimal Rational Fraction Representation of Sequences Revisited,
{\sl IEEE Transactions on Information Theory}, 68(2022) 1316-1328.
\bibitem{glv} R. Gallant, R. Lambert, and S. Vanstone,
Fast point multiplication on elliptic curves with efficient endomorphisms,
{\it Crypto 2001}, LNCS {\bf 2139}, 190-200.
\bibitem{khin} A. Ya. Khinchin, Continued fractions, Phoenix Books, Phoenix, 1964.
\bibitem{lang} S. Lang, Introduction to Diophantine Approximations,
Springer-Verlag, 1995.
\bibitem{LLL} A. Lenstra, H. Lenstra, and L. Lov\'asz, Factoring polynomials with rational coefficients, {\sl Mathematische Annalen} 261 (1982) 515-534.
\bibitem{Libbrect} U. Libbrect,
Chinese Mathematics in the Thirteenth Century, {\sl Dover Publications}, 2005.
\bibitem{Qin} Jiushao Qin, Mathematical Treatise in Nine Sections, 1247.
\bibitem{Wang} X. Wang, G. Xu, M. Wang, and X. Meng, Mathematical Foundations of Public Key Cryptography, {\sl CRC Press}, October 2015.
\bibitem{Wu_ed} W. Wu (edited), Qin JiuShao and $\langle$ShuShu JiuZhang$\rangle$ (in Chinese), Beijing Normal University Press, 1987.
\bibitem{Xu} G. Xu, On Solving A Generalized Chinese Remainder Theorem in the Presence of Remainder Errors, {\sl Springer Proceedings in Math. \& Stat. Series 251}, pp. 461-476, 2018.
\bibitem{XuLi} G. Xu and B. Li, On the Algorithmic Significance and Analysis of the Method of DaYan Deriving One ({\sl Chinese}),
{\sl https://arxiv.org/abs/1610.01175}.
\end{thebibliography}
\end{document}